\newtheorem{theorem}{Theorem}[subsection]
\newtheorem*{claim*}{Claim}
\newtheorem*{theorem*}{Theorem}
\newtheorem*{lemma*}{Lemma}
\theoremstyle{definition}
\newtheorem*{definition*}{Definition}
\newtheorem*{proposition*}{Proposition}
\newtheorem*{notation*}{Notation}
\newtheorem*{remark*}{Remark}
\numberwithin{equation}{section}
\newcommand{\R}{\mathbb{R}}
\newcommand{\Z}{\mathbb{Z}} 
\newcommand{\N}{\mathbb{N}} 
\renewcommand{\Pr}{\text{Pr}}
\title{A Comment on the Sums $\sum_{n \in \Z} \frac{(-1)^{nk}}{(an+1)^k}$}
\author{Vivek Kaushik}
\date{}
\begin{document}
\maketitle
\begin{abstract}
We recall a proof of Euler's identity $\sum_{n=1}^{\infty} \frac{1}{n^2}=\frac{\pi^2}{6}$ involving the evaluation of a double integral. We extend the method to find Hurwitz Zeta series of the form $S(k,a)=\sum_{n \in \Z} \frac{(-1)^{nk}}{(an+1)^k},$ where $a \in \N \setminus \lbrace 1 \rbrace$ and  $k \in \N.$ In particular, we consider a general $k$-dimensional integral over $(0,1)^k$ that equals the series representation $S(k,a).$ Then we use an algebraic change of variables that diffeomorphically maps $(0,1)^k$ to a $k$-dimensional hyperbolic polytope. We interpret the integral as a sum of two probabilities, and find explicit representations of such probabilities with combinatorial techniques.  
\end{abstract}
\section{Introduction}

In this article, we evaluate  Hurwitz Zeta Series of the form \begin{align}\label{S(k,a)}
    S(k,a)  &= \sum_{n \in \Z} \frac{(-1)^{nk}}{(an+1)^k}, \quad a \in \N \setminus \lbrace 1 \rbrace, 
    \ k \in \N.
\end{align} 
The values of such series can be obtained through standard techniques from Fourier Analysis and complex variables. Some specific examples of $S(k,a)$ are found in \cites{JC,BFY,VK}, with the case $a=4$ being the focal point of \cites{BCK,NDE,ZS2010,ZS2012}. 

In particular, we provide an alternative method using multiple integration. We generalize the double integral proof of 
\begin{align}\label{Zeta(2)}
\sum_{n \geq 1} \frac{1}{n^2}=\frac{\pi^2}{6} 
\end{align} 
by 
Zagier and Kontsevich \cite[p. 8 - 9]{ZK}, which is as follows. Consider the integral
\begin{align}\label{Zagier Integral}
    I_2 &= \int_{(0,1)^2} \frac{1}{\sqrt{x_1 x_2} (1-x_1x_2)} \ dx_2 \ dx_1.
\end{align}
We convert the integrand into a geometric series, 
\begin{align}\label{S(2,2) Geometric Series}
    \frac{1}{\sqrt{x_1 x_2} (1-x_1x_2)} &= \sum_{n \geq 0} (x_1x_2)^{n-1/2}.
\end{align}
Replacing the integrand with the geometric series representation, we find
\begin{align}
    I_2 &= \int_{(0,1)^2} \sum_{n \geq 0} (x_1x_2)^{n-1/2} \ dx_1 \ dx_2 \nonumber  \\
    &=  \sum_{n \geq 0} \int_{(0,1)^2} (x_1x_2)^{n-1/2} \ dx_1 \ dx_2 \label{MCT} \\
    &= \sum_{n \geq 0} \frac{1}{(n+1/2)^2} \nonumber \\
    &=\sum_{n \geq 0} \frac{4}{(2n+1)^2} \nonumber \\
    &= 4 S(2,2) \nonumber,
\end{align}
where the interchanging of sum and integral in \eqref{MCT} follows from the Monotone Convergence Theorem.
On the other hand, the change of variables
\begin{align}\label{Zagier Kontsevich COV}
x_1=\frac{\xi_1^2(1+\xi_2^2)}{1+\xi_1^2}, \quad x_2=\frac{\xi_2^2(1+\xi_1^2)}{1+\xi_2^2},    
\end{align}
 has Jacobian Determinant
\begin{align}\label{Zagier COV Determinant}
\det \frac{\partial(x_1,x_2)}{\partial (\xi_1,\xi_2)} &= \frac{4\sqrt{x_1x_2}(1-x_1x_2)}{(1+\xi_1^2)(1+\xi_2^2)}
\end{align}
and diffeomorphically maps $(0,1)^2$ to the hyperbolic triangle 
\begin{align}\label{hyperbolic polytope}
\mathbb{H}^2 &= \lbrace (\xi_1,\xi_2) \in \R^2: \xi_1\xi_2 <1, \xi_1, \xi_2>0 \rbrace. 
\end{align}
Hence,
\begin{align*}
I_2 &= \int_{\mathbb{H}^2} \frac{4}{(1+\xi_1^2)(1+\xi_2^2)} \ d\xi_2 \ d\xi_1 \\
&= \int_{0}^{\infty} \frac{4\cot^{-1}(\xi_1)}{1+\xi_1^2} \ d \xi_1 \\
&= \int_{0}^{\infty} \frac{2 \pi - 4\tan^{-1}(\xi_1)}{1+\xi_1^2} \ d \xi_1 \\
&=\frac{\pi^2}{2}.
\end{align*}
Thus, $$S(2,2)=\frac{\pi^2}{8}.$$ Finally, we can write
\begin{align*}
    \sum_{n \geq 1}\frac{1}{n^2} &= \sum_{n \geq 1} \frac{1}{(2n)^{2}} + \sum_{n \geq 0} \frac{1}{(2n+1)^{2}}\\  &=\sum_{n \geq 1} \frac{1}{(2n)^{2}} + S(2,2) \\
    &= \frac{1}{4} \zeta(2) + \frac{\pi^2}{8},
\end{align*}
from which we may deduce $$\sum_{n \geq 1}\frac{1}{n^2}=\frac{\pi^2}{6}$$ using simple algebra. 

We extend this proof to find arbitrary $S(k,a).$ In particular, we evaluate the integral
\begin{align*}
    I_{k,a} &= \frac{1}{a^k}\int_{(0,1)^k} \frac{(x_1 \dots x_k)^{-1+1/a} + (x_1 \dots x_k)^{-1/a}}{1-(-1)^kx_1 \dots x_k} \ dx_1 \ \dots \ dx_k,
\end{align*} the generalization of \eqref{Zagier Integral}
in two ways. The first way will be to convert the integrand into a geometric series and show that it is equal to $S(k,a).$ On the other hand, we will use a change of variables generalizing \eqref{Zagier Kontsevich COV}. 

\section{Evaluation of $I_{k,a}$}
\subsection{From Integral to Series}
We will evaluate
\begin{align}\label{Generalized Zagier Integral}
    I_{k,a} &= \frac{1}{a^k} \int_{(0,1)^k} \frac{(x_1 \dots x_k)^{-1+1/a} +  (x_1 \dots x_k)^{-1/a}}{1-(-1)^k x_1 \dots x_k} \ dx_1 \ \dots \ dx_k,
\end{align} the generalization of \eqref{Zagier Integral}
in two ways. 

\begin{theorem}\label{S(k,a)=I(k,a)}
We have $$I_{k,a}=S(k,a).$$
\end{theorem}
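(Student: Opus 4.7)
The plan is to imitate the Zagier--Kontsevich argument recalled above: expand the kernel $1/(1-(-1)^k x_1 \cdots x_k)$ as a geometric series, use Fubini to separate variables, integrate each resulting monomial by the elementary rule $\int_0^1 x^t\,dx = 1/(t+1)$, and then re-index the resulting one-sided series to match the two-sided sum defining $S(k,a)$.

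Carrying this out, for $(x_1,\dots,x_k)\in(0,1)^k$ the bound $|(-1)^k x_1 \cdots x_k|<1$ gives
$$\frac{1}{1-(-1)^k x_1 \cdots x_k} = \sum_{n=0}^{\infty}(-1)^{nk}(x_1\cdots x_k)^n.$$
Multiplying through by $(x_1\cdots x_k)^{-1+1/a} + (x_1\cdots x_k)^{-1/a}$, exchanging sum and integral (justified below), and integrating each variable independently, I would obtain
$$I_{k,a} = \sum_{n=0}^{\infty}(-1)^{nk}\left[\frac{1}{(an+1)^k} + \frac{1}{(an+a-1)^k}\right].$$
To match this with $S(k,a) = \sum_{n\in\Z}(-1)^{nk}/(an+1)^k$, I would split the two-sided sum at $n=0$ and reindex the negative half via $n = -m-1$; the denominator $(an+1)^k$ becomes $(-1)^k(am+a-1)^k$, and the factor $(-1)^k$ produced by shifting $(-1)^{nk} = (-1)^{(m+1)k}$ cancels it, leaving $\sum_{m\geq 0}(-1)^{mk}/(am+a-1)^k$, which together with the nonnegative half reproduces the above series.

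I expect the main obstacle to be the justification of the termwise integration. For even $k$ (hence $k\geq 2$) all summands are nonnegative and the resulting series $\sum_n [(an+1)^{-k}+(an+a-1)^{-k}]$ is finite, so Tonelli's theorem applies directly. For odd $k$ the denominator equals $1+x_1\cdots x_k\in(1,2)$, and the partial sums $\sum_{n=0}^N(-(x_1\cdots x_k))^n = [1-(-x_1\cdots x_k)^{N+1}]/(1+x_1\cdots x_k)$ are uniformly bounded by $2/(1+x_1\cdots x_k)$. This furnishes the integrable majorant
$$\frac{2\bigl[(x_1\cdots x_k)^{-1+1/a}+(x_1\cdots x_k)^{-1/a}\bigr]}{1+x_1\cdots x_k},$$
so dominated convergence legitimises the swap, even in the conditionally convergent case $k=1$.
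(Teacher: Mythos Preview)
Your proof is correct and follows essentially the same route as the paper: expand the kernel as a geometric series, interchange sum and integral, integrate the monomials, and reindex the second piece to obtain the negative-$n$ tail of $S(k,a)$. Your justification of the interchange is in fact more careful than the paper's blanket appeal to monotone convergence, since you handle the alternating odd-$k$ case separately via dominated convergence.
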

\begin{proof}

First, we convert the integrand into a geometric series as such
\begin{align}\label{Generalized Zagier GS}
\frac{1}{a^k} \frac{(x_1 \dots x_k)^{-1+1/a} +  (x_1 \dots x_k)^{-1/a}}{1-(-1)^kx_1 \dots x_k}  &= \sum_{n \geq 0} \frac{(-1)^{nk}}{a^k} \left((x_1 \dots x_k)^{n-1+1/a}+(x_1 \dots x_k)^{n-1/a}\right).
\end{align}
Replacing this geometric series representation with the integrand in $I_{k,a}$ we obtain
\begin{align}
    I_{k,a} &= \int_{(0,1)^k}  \sum_{n \geq 0} \frac{(-1)^{nk}}{a^k} \left((x_1 \dots x_k)^{n-1+1/a}+(x_1 \dots x_k)^{n-1/a}\right) \ dx_1 \ \dots \ dx_k \nonumber \\ 
    &=   \sum_{n \geq 0} \int_{(0,1)^k} \frac{(-1)^{nk}}{a^k} \left((x_1 \dots x_k)^{n-1+1/a}+(x_1 \dots x_k)^{n-1/a}\right) \ dx_1 \ \dots \ dx_k \label{Monotone Convergence Theorem} \\
    &= \sum_{n \geq 0} \frac{(-1)^{nk}}{a^k(n+1/a)^k}+\sum_{n \geq 0} \frac{(-1)^{nk}}{a^k(n-1/a+1)^k} \nonumber \\
    &=\sum_{n \geq 0} \frac{(-1)^{nk}}{(an+1)^k}+\sum_{n \geq 0} \frac{(-1)^{nk}}{(an+a-1)^k} \nonumber \\
    &=\sum_{n \geq 0} \frac{(-1)^{nk}}{(an+1)^k}+\sum_{n \geq 0} \frac{(-1)^{(n+1)k}}{(-an-a+1)^k} \nonumber \\
    &=\sum_{n \geq 0} \frac{(-1)^{nk}}{(an+1)^k}+\sum_{n \leq -1} \frac{(-1)^{nk}}{(an+1)^k} \nonumber \\
    &=\sum_{n \in \Z} \frac{(-1)^{nk}}{(an+1)^k}=S(k,a) \nonumber,
\end{align}
where the interchanging of sum and integral in \eqref{Monotone Convergence Theorem} follows from the Monotone Convergence Theorem.
\end{proof}

\subsection{From Integral to Hyperbolic Polytope}
Now, we evaluate $I_{k,a}$ directly. We use the change of variables 
\begin{align} \label{Zagier Generalized COV}
    x_i &= \frac{\xi_i^a(1+\xi_{i+1}^a)}{1+\xi_i^a}, \quad i \in \lbrace 1, \dots, k \rbrace.
\end{align}
where we cyclically index mod $k,$ that is, we have $\xi_{k+1}:=\xi_1.$

\begin{theorem}\label{Zagier COV Theorem}
The change of variables from \eqref{Zagier Generalized COV} has Jacobian Determinant
\begin{align*}
    \det \frac{\partial(x_1, \dots, x_k)}{\partial(\xi_1, \dots, \xi_k)} &= \begin{cases}
    a(\xi_1)^{a-1} & k=1 \\ 
    \frac{a^k(\xi_1 \dots \xi_k)^{a-1}}{(1+\xi^a_1) \dots (1+\xi^a_k)} (1-(-1)^k (\xi_1 \dots \xi_k)^{a}) & \text{else}.
    \end{cases}
\end{align*}
and diffeomorphically maps $(0,1)^k$ to the hyperbolic polytope 
\begin{align*}
    \mathbb{H}^k &=\lbrace (\xi_1, \dots, \xi_k) \in \R^k: \xi_i \xi_{i+1}<1, \xi_i>0, \ i \in \lbrace 1, \dots, k \rbrace  \rbrace.
\end{align*}
\end{theorem}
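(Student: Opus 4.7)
The plan is to compute the Jacobian by exploiting the cyclic sparsity of the substitution, to identify the image via a simple algebraic identity for $1 - x_i$, and then to upgrade local bijectivity to a global diffeomorphism by a properness argument.

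Since each $x_i$ depends only on $\xi_i$ and $\xi_{i+1}$, the Jacobian matrix has nonzero entries only at the diagonal positions $A_i := \partial x_i/\partial \xi_i$ and at the cyclic super-diagonal positions $B_i := \partial x_i / \partial \xi_{i+1}$, with $B_k$ placed in row $k$, column $1$. A routine differentiation yields $A_i = a\xi_i^{a-1}(1+\xi_{i+1}^a)/(1+\xi_i^a)^2$ and $B_i = a\xi_i^a \xi_{i+1}^{a-1}/(1+\xi_i^a)$. Cofactor expansion along the first column of this cyclic bidiagonal matrix gives $\det = \prod_i A_i + (-1)^{k+1}\prod_i B_i$, since the minor $M_{11}$ is upper triangular with diagonal $A_2, \ldots, A_k$ while $M_{k1}$ is lower triangular with diagonal $B_1, \ldots, B_{k-1}$. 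The cyclic telescoping $\prod_i(1+\xi_{i+1}^a) = \prod_i(1+\xi_i^a)$ then simplifies both products to yield the stated formula; the case $k=1$ reduces to $x_1 = \xi_1^a$ and is immediate.

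For the image, I would compute the identity $1 - x_i = (1-(\xi_i\xi_{i+1})^a)/(1+\xi_i^a)$, from which $x_i \in (0,1)$ is equivalent to $\xi_i > 0$ and $\xi_i\xi_{i+1} < 1$. Hence the natural $\xi$-domain is precisely $\mathbb{H}^k$ and the image lies in $(0,1)^k$. On $\mathbb{H}^k$ the trailing factor $1-(-1)^k(\xi_1 \cdots \xi_k)^a$ of the Jacobian is strictly positive: for $k$ odd it equals $1+(\xi_1 \cdots \xi_k)^a$, and for $k$ even it equals $1 - \prod_{j=1}^{k/2}(\xi_{2j-1}\xi_{2j})^a$, which is positive by pairing the constraints $\xi_{2j-1}\xi_{2j} < 1$. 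Hence the Jacobian never vanishes on $\mathbb{H}^k$ and the substitution is a local diffeomorphism onto an open subset of $(0,1)^k$.

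The main obstacle is upgrading this to a global diffeomorphism, which I plan to handle by establishing properness of $f: \mathbb{H}^k \to (0,1)^k$. Suppose $\xi^{(n)}$ eventually exits every compact subset of $\mathbb{H}^k$; then along a subsequence either some $\xi_j^{(n)} \to \infty$, some $\xi_j^{(n)} \to 0$ with all other coordinates bounded, or some product $\xi_j^{(n)}\xi_{j+1}^{(n)} \to 1$. In the first case, the constraint $\xi_j\xi_{j+1} < 1$ forces $\xi_{j+1}^{(n)} \to 0$, and writing $x_j = \xi_j^a/(1+\xi_j^a) + \xi_j^a \xi_{j+1}^a/(1+\xi_j^a)$ shows $x_j^{(n)} \to 1$. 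In the second case, $x_j^{(n)} \to 0$. In the third case, the $1-x_j$ identity gives $x_j^{(n)} \to 1$. In every case $f(\xi^{(n)})$ exits every compact subset of $(0,1)^k$, so $f$ is proper. A proper local diffeomorphism into the simply connected target $(0,1)^k$ is a covering map, and connectedness of $\mathbb{H}^k$ then forces it to be a global diffeomorphism; smoothness of the inverse follows from the inverse function theorem.
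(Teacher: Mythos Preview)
Your Jacobian computation is essentially the paper's: both exploit the cyclic bidiagonal structure and reduce the determinant, via cofactor expansion, to a sum of two products of diagonal entries of triangular minors, followed by the same cyclic telescoping $\prod_i(1+\xi_{i+1}^a)=\prod_i(1+\xi_i^a)$. (You expand along the first column; the paper expands along the first row, which is the transposed version of the same trick.)

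Where you genuinely diverge is the diffeomorphism claim. The paper simply asserts that the map is a $\mathcal{C}^1$ bijection from $\mathbb{H}^k$ onto $(0,1)^k$ with nonvanishing Jacobian and then invokes the inverse function theorem. You instead supply the missing content: the identity $1-x_i=(1-(\xi_i\xi_{i+1})^a)/(1+\xi_i^a)$ pins down $\mathbb{H}^k$ as the exact preimage of $(0,1)^k$; the pairing argument for even $k$ shows the Jacobian never vanishes; and your properness analysis, together with the covering-space fact for simply connected targets, upgrades local to global bijectivity. This is a more rigorous route. It does cost an extra ingredient (connectedness of $\mathbb{H}^k$, which you use but do not state; it follows since $t\xi\in\mathbb{H}^k$ for $t\in(0,1]$, so every point is joined by a path to the convex set $(0,1)^k\subset\mathbb{H}^k$), but in exchange it avoids constructing an explicit inverse. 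The paper's approach is shorter precisely because it leaves the bijectivity verification to the reader.
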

\begin{proof}
The case $k=1$ is trivial. The case $k=2$ recovers the change of variables in \eqref{Zagier Kontsevich COV} from the introduction; one may see that the stated results in \eqref{Zagier COV Determinant} and \eqref{hyperbolic polytope} corroborate the theorem. 

Suppose $k>2.$ Note that 
\begin{align*}
    \frac{\partial x_i}{\partial \xi_j} &= \begin{cases} \frac{a\xi_i^{a-1}(1+\xi_{i+1}^a)}{(1+\xi_{i}^a)^2} & j=i \\  
    \frac{a\xi_i^{a} \xi_{j}^{a-1}}{1+\xi_{i}^a} & j \equiv i+1 \mod k \\
    0 & \text{else}\end{cases}.
\end{align*}
These are the entries of the Jacobian matrix $\frac{\partial(x_1, \dots, x_k)}{\partial(\xi_1, \dots, \xi_k)}$ corresponding to \eqref{Zagier Generalized COV}. Using cofactor expansion along the first row, we find that 
\begin{align*}
    \det \frac{\partial(x_1, \dots, x_k)}{\partial(\xi_1, \dots, \xi_k)} &= \frac{\partial x_1}{\partial \xi_1} \det(A)- \frac{\partial x_2}{\partial \xi_2} \det(B),
\end{align*}
where $A$ and $B$ are $(k-1) \times (k-1)$ matrices with entries
\begin{align*}
    A_{ij} &= \left[ \frac{\partial x_{i+1}}{\partial \xi_{j}} \right]_{i,j \neq 1} \\
    B_{ij} &= \left[ \frac{\partial x_{i+1}}{\partial \xi_{j}} \right]_{i \neq 1 ,j \neq 2}.
\end{align*}
It can be seen that $A$ is lower triangular, and $B$ is upper triangular. Hence their determinants are easy to calculate using cofactor expansions on the top row of $A$ and the bottom row of $B,$ respectively. The result will simplify down to the claimed Jacobian determinant. 

For the second statement, it can be shown that \eqref{Zagier Generalized COV} is a $\mathcal{C}^1$ bijective map from $(0,1)^k$ to $\mathbb{H}^k$ with $\det \frac{\partial(x_1, \dots, x_k)}{\partial(\xi_1, \dots, \xi_k)} \neq 0$ in $\mathbb{H}^k.$  The Inverse Function Theorem guarantees on any local neighborhood in $\mathbb{H}^k,$ we will have $$\frac{\partial(x_1, \dots, x_k)}{\partial(\xi_1, \dots, \xi_k)}^{-1}=\frac{\partial(\xi_1, \dots, \xi_k)}{\partial(x_1, \dots, x_k)}.$$
\end{proof}

\begin{remark*}
When $a=2,$ if we instead were to make the substitution $x_i=\sqrt{\frac{\xi_i^2(1+\xi_{i+1}^2)}{1+\xi_i^2}}$ and then $\xi_i=\tan(u_i)$ will result in us obtaining $$x_i= \frac{\sin(u_i)}{\cos(u_{i+1})},$$ which is Calabi's trigonometric change of variables, considered in all of \cite{BCK,NDE,ZS2010,ZS2012,DA'K,FL,VK}. Hence, we may view \eqref{Zagier Generalized COV} as an algebraic generalization of Calabi's change of variables.
\end{remark*}

Hence, our two theorems and the change of variables formula imply 
\begin{align}
    S(k,a) &= \int_{\mathbb{H}^k} \frac{1+(\xi_1 \dots \xi_k)^{a-2}}{(1+\xi^a_1) \dots (1+\xi^a_k)} \ d \xi_1 \dots \ d \xi_k \nonumber \\
    &=\int_{\mathbb{H}^k} \frac{1}{(1+\xi^a_1) \dots (1+\xi^a_k)} \ d \xi_1 \dots \ d \xi_k + \int_{\mathbb{H}^k}\frac{(\xi_1 \dots \xi_k)^{a-2}}{(1+\xi^a_1) \dots (1+\xi^a_k)} \ d \xi_1 \dots \ d \xi_k \label{S(k,a) hyperbolic polytope Integral}
\end{align}
We wish to evaluate \eqref{S(k,a) hyperbolic polytope Integral} by mimicking the combinatorial analysis used in \cite[p. 592 - 599]{VK}

\subsection{Hyperbolic Polytope and Combinatorics}
We write $[m]:=\lbrace 1, \dots, m \rbrace$ for $m \in \N.$ Let
$\Xi_i$ for $i \in [k]$ be independent and identically distributed with density function
\begin{align*}
    f_{\Xi_i}(\xi_i) &=  \frac{\frac{a}{\pi} \sin \left( \frac{\pi}{a} \right) }{1+\xi_i^a}, \quad \xi_i \geq 0.
\end{align*}
Similarly, let $\Theta_i$ for $i \in [k]$ be independent and identically distributed with density function
\begin{align*}
    f_{\Theta_i}(\theta_i) &=  \frac{\frac{a}{\pi} \sin \left( \frac{\pi}{a} \right) \theta_i^{a-2} }{1+\theta_i^a}, \quad \theta_i \geq 0.
\end{align*}

\begin{theorem}\label{Xi and Theta Density functions}
For each $i \in [k],$ both $f_{\Xi_i}(\xi_i)$ and $f_{\Theta_i}(\theta_i)$ are valid density functions.
\end{theorem}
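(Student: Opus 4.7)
The plan is to verify both non-negativity and unit total mass. Non-negativity is immediate: for $\xi_i,\theta_i\ge 0$ and $a\in\mathbb{N}\setminus\{1\}$ every factor in the two densities is nonnegative, since $\sin(\pi/a)>0$ for $a\ge 2$. So the content is in showing
\[
\int_0^\infty \frac{d\xi}{1+\xi^a} \;=\; \int_0^\infty \frac{\theta^{a-2}\,d\theta}{1+\theta^a} \;=\; \frac{\pi}{a\sin(\pi/a)}.
\]

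First I would handle the $\Xi_i$ integral. The natural move is the substitution $u=\xi^a$, so that $d\xi=\tfrac{1}{a}u^{1/a-1}\,du$, which converts $\int_0^\infty \frac{d\xi}{1+\xi^a}$ into $\tfrac{1}{a}\int_0^\infty \frac{u^{1/a-1}}{1+u}\,du$. This is a standard Beta-function / Mellin transform integral: for $0<s<1$ one has $\int_0^\infty \frac{u^{s-1}}{1+u}\,du = B(s,1-s)=\Gamma(s)\Gamma(1-s)=\frac{\pi}{\sin(\pi s)}$ by Euler's reflection formula. Applying this with $s=1/a\in(0,1)$ gives the claimed value $\frac{\pi}{a\sin(\pi/a)}$, confirming that $f_{\Xi_i}$ integrates to $1$.

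For the $\Theta_i$ integral I would avoid repeating the calculation by exploiting the involution $\theta\mapsto 1/\phi$. With $d\theta=-\phi^{-2}\,d\phi$ and $\theta^{a-2}=\phi^{-(a-2)}$, the integrand transforms so that $\int_0^\infty \frac{\theta^{a-2}}{1+\theta^a}\,d\theta = \int_0^\infty \frac{\phi^{-a}}{1+\phi^{-a}}\,d\phi = \int_0^\infty \frac{d\phi}{1+\phi^a}$, which reduces the second normalization exactly to the first. Alternatively, the same substitution $u=\theta^a$ works directly and lands on $\tfrac{1}{a}\int_0^\infty \frac{u^{-1/a}}{1+u}\,du$, which is the reflection integral at $s=1-1/a$ and yields the same constant $\frac{\pi}{a\sin(\pi/a)}$.

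There is no serious obstacle here; the only thing to be a bit careful about is the case $a=2$, where the $\Theta_i$ density becomes $\frac{(2/\pi)}{1+\theta^2}$, a standard half-Cauchy, and the exponent $a-2=0$ must be interpreted so that $\theta^{a-2}=1$ (in particular, no singularity at $\theta=0$). For $a\ge 3$ the factor $\theta^{a-2}$ only improves integrability near $0$, and the $\theta^{-2}$ decay of the integrand at infinity guarantees convergence, so the Beta-function computation is valid across all admissible $a$. Combining the two evaluations with the chosen normalizing constants $\tfrac{a}{\pi}\sin(\pi/a)$ gives total mass $1$ in each case, completing the verification.
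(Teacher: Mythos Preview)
Your argument is correct. The paper proceeds differently for the first density: rather than passing to the Beta integral and invoking Euler's reflection formula, it cites the explicit antiderivative of $1/(1+x^a)$ from Gradshteyn--Ryzhik, a partial-fraction expression involving sums of $\log$ and $\arctan$ terms, and then checks that this antiderivative evaluates to $-\tfrac{\pi}{a}\csc(\pi/a)$ at $x=0$ and to $0$ as $x\to\infty$. Your Beta-function route is shorter and more self-contained for the goal of verifying unit mass; the paper's route has the advantage of producing an explicit formula for the full CDF $F_{\Xi_i}(t)$, which reappears later as $\psi(t)$ in the integrals $J_{r_1,\dots,r_n}$. For the second density the two proofs coincide: both use the involution $\theta\mapsto 1/\theta$ to reduce to the first case (the paper phrases this as $F_{\Theta_i}(t)=1-F_{\Xi_i}(1/t)$).
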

\begin{proof}
We recall the cumulative distribution function for $\Xi_i$ and $\Theta_i$ are  
\begin{align}
    F_{\Xi_i}(t) &= \int_{0}^t f_{\Xi_i}(\xi_i) \ d \xi_i \label{Xi CDF} \\ 
    F_{\Theta_i}(t) &= \int_{0}^t f_{\Theta_i}(\theta_i)  \ d \theta_i. \label{Theta CDF}
\end{align}
The claim is equivalent to showing $\lim_{t \rightarrow \infty }F_{X_i}(t)=1$ and $\lim_{t \rightarrow \infty } F_{\Theta_i}(t)=1.$ 

According to Gradshteyn and Ryzhik \cite[Section 2.142]{GR}, we see 
\begin{align} \label{GR Closed Form}
     \int \frac{1}{1+x^a} \ dx  &=  \begin{cases} 
     -\frac{2}{a} \sum_{j=0}^{\lfloor a/2-1 \rfloor} P_j(x) \cos \left( \frac{2j+1}{a} \pi \right) + Q_j(x) \sin \left( \frac{2j+1}{a} \pi \right) & a \text{ even} \\
     \frac{1}{a} \log(1+x) -\frac{2}{a} \sum_{j=0}^{\lfloor a/2-1 \rfloor} P_j(x) \cos \left( \frac{2j+1}{a} \pi \right) + Q_j(x) \sin \left( \frac{2j+1}{a} \pi \right) & a \text{ odd}. \\
     \end{cases},
\end{align}
where 
\begin{align*}
    P_j(x) &= \frac{1}{2} \log \left( x^2-2x \cos \left( \frac{2j+1}{a} \pi \right) +1 \right) \\
    Q_j(x) &= \arctan \left( \frac{x-\cos \left( \frac{2j+1}{a} \pi \right)}{\sin \left( \frac{2j+1}{a} \pi \right)}\right).
\end{align*}
It can be shown upon plugging in $x=0,$ and converting the cosines and sines into complex exponentials, that the right hand side of \eqref{GR Closed Form} is $-\frac{\pi}{a} \csc(\pi/a).$ It also can be shown that as $x \rightarrow \infty,$ the right hand side of \eqref{GR Closed Form} approaches $0.$ Observing these facts will allow us to deduce $\lim_{t \rightarrow \infty }F_{X_i}(t)=1.$

The second result follows from making the substitution $\theta_i \mapsto 1/\theta_i$ in the defining integral representation presented in \eqref{Theta CDF} and deducing $F_{\Theta_i}(t)=1-F_{\Xi_i}(1/t).$ 
\end{proof}

Our main goal is to evaluate
\begin{align}
     \Pr(\Xi_i \Xi_{i+1}<1, i \in [k]) + \Pr(\Theta_i \Theta_{i+1}<1, i \in [k]) \label{hyperbolic polytope Probability},
\end{align}
where $\Xi_{k+1}:=\Xi_1,\Theta_{k+1}:=\Theta_1.$ 
In words, \eqref{hyperbolic polytope Probability} is the sum of the probability that all $\Xi_i$ have cyclically consecutive products less than $1$ and the probability that all $\Theta_i$ have cyclically consecutive products less than $1.$ It is easy to see through \eqref{S(k,a) hyperbolic polytope Integral} that $S(k,a)$ is precisely the product of $\left(\frac{\pi}{a} \csc \left( \frac{\pi}{a} \right) \right)^k$ and \eqref{hyperbolic polytope Probability}. 

We begin with the easy case in calculating \eqref{hyperbolic polytope Probability}. 
\begin{theorem}\label{Prob Xi_i, Theta_i<1}
Suppose $\Xi_i,\Theta_i<1$ for each $i \in [k].$ Then \eqref{hyperbolic polytope Probability} is equal to 
\begin{align*}
    \left(\int_{0}^1 \frac{\frac{a}{\pi} \sin \left(\frac{\pi}{a} \right)}{1+\xi^a} \  d \xi\right)^k + \left(\int_{0}^1 \frac{\frac{a}{\pi} \sin \left (\frac{\pi}{a} \right) \theta^{a-2}}{1+\theta^a} \  d \theta\right)^k
\end{align*}
\end{theorem}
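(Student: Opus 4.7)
The plan is to observe that under the standing hypothesis of the theorem, the cyclic product constraints $\Xi_i\Xi_{i+1}<1$ and $\Theta_i\Theta_{i+1}<1$ become redundant. Indeed, whenever $\Xi_i,\Xi_{i+1}\in(0,1)$, the product $\Xi_i\Xi_{i+1}$ lies in $(0,1)$ as well and is therefore automatically strictly less than $1$; the same observation applies verbatim to the $\Theta_i$ thanks to the cyclic convention $\Xi_{k+1}:=\Xi_1,\ \Theta_{k+1}:=\Theta_1$. Consequently, on the event $\{\Xi_i<1,\ i\in[k]\}$ the first summand of \eqref{hyperbolic polytope Probability} collapses to $\Pr(\Xi_i<1,\ i\in[k])$, and on the event $\{\Theta_i<1,\ i\in[k]\}$ the second summand collapses to $\Pr(\Theta_i<1,\ i\in[k])$.

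Next, I would invoke the independence and identical distribution of the $\Xi_i$ (and, separately, of the $\Theta_i$) that was postulated when these variables were introduced. This allows each joint probability to be factored as a product of $k$ equal marginal probabilities:
\begin{align*}
\Pr(\Xi_i<1,\ i\in[k])=\prod_{i=1}^{k}\Pr(\Xi_i<1)=\Pr(\Xi_1<1)^{k},
\end{align*}
and analogously for the $\Theta_i$.

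Finally, I would unfold the single-variable probabilities as CDF values, $\Pr(\Xi_1<1)=F_{\Xi_1}(1)=\int_0^1 f_{\Xi_1}(\xi)\,d\xi$ and $\Pr(\Theta_1<1)=F_{\Theta_1}(1)=\int_0^1 f_{\Theta_1}(\theta)\,d\theta$, inserting the explicit density formulas displayed immediately before Theorem~\ref{Xi and Theta Density functions}. Raising these integrals to the $k$-th power and adding them reproduces exactly the right-hand side of the claim. There is no substantive obstacle in this argument; the point of isolating this case is precisely that the cyclic product constraint trivialises when every coordinate is already bounded above by $1$, leaving all the genuine combinatorial work for the remaining regions in which some $\Xi_i$ or some $\Theta_i$ exceeds $1$.
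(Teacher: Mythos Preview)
Your proposal is correct and follows essentially the same route as the paper: note that the cyclic product constraints are automatically satisfied when each coordinate is below $1$, then factor the joint probability over the independent, identically distributed coordinates and insert the explicit densities. The paper states this more tersely, writing the probabilities directly as integrals over $(0,1)^k$ of the product densities, but the substance is identical.
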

\begin{proof}
Clearly for any $i \in [k],$ we have $\Xi_i\Xi_{i+1},\Theta_i\Theta_{i+1}<1.$ Hence \eqref{hyperbolic polytope Probability} is equal to
\begin{align*}
    \int_{(0,1)^k} f_{\Xi_1}(\xi_1) \dots f_{\Xi_k}(\xi_k)  \ d \xi_1 \dots \ d \xi_k + \int_{(0,1)^k} f_{\Theta_1}(\theta_1) \dots f_{\Theta_k}(\theta_k)  \ d \theta_1 \dots \ d \theta_k,
\end{align*}
from which the result immediately follows.
\end{proof}

The nontrivial case is when there exists $i \in [k]$ such that $\Xi_i, \Theta_i \geq 1.$ We wish to set up an explicit integral representation of \eqref{hyperbolic polytope Probability} in this case. 

\begin{theorem}\label{Conjecture on n}
Suppose $\Xi_1, \dots, \Xi_k, \Theta_1, \dots, \Theta_k$ satisfy the conditions as described by their respective probability terms in \eqref{hyperbolic polytope Probability}. 
Suppose further $r_1, \dots, r_n \in [k]$ are distinct with $1 \leq \Xi_{r_n} \leq  \dots  \leq  \Xi_{r_1}$ and $1 \leq \Theta_{r_n} \leq  \dots  \leq  \Theta_{r_1}.$ Then for distinct $i,j \in [n],$ we have $r_i$ and $r_j$ are pairwise cyclically nonconsecutive, that is,  $|r_i-r_j| \notin \lbrace 1, k-1 \rbrace.$ In addition, $n \leq \lfloor k/2 \rfloor.$ 
\end{theorem}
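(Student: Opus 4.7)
The plan is to combine the strict cyclic constraint $\Xi_i\Xi_{i+1}<1$ (respectively $\Theta_i\Theta_{i+1}<1$) built into \eqref{hyperbolic polytope Probability} with a standard independent-set bound on the cycle graph $C_k$ whose vertices are the residues modulo $k$ and whose edges join cyclically consecutive residues.

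First I would dispatch the pairwise cyclic nonconsecutiveness by contradiction. Suppose that some distinct $i,j \in [n]$ satisfy $|r_i - r_j| \in \lbrace 1, k-1 \rbrace$; without loss of generality assume $r_j \equiv r_i+1 \pmod{k}$. By hypothesis $\Xi_{r_i},\Xi_{r_j} \geq 1$, so $\Xi_{r_i}\Xi_{r_j} \geq 1$, which directly violates the constraint $\Xi_{r_i}\Xi_{r_i+1}<1$ appearing in the first probability of \eqref{hyperbolic polytope Probability}. The same contradiction is reached from the $\Theta$ family using $\Theta_{r_i}\Theta_{r_j}\geq 1$. Hence the indices $r_1,\dots,r_n$ are pairwise cyclically nonconsecutive; equivalently, $\lbrace r_1,\dots,r_n \rbrace$ is an independent set of vertices in $C_k$.

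Second I would derive the cardinality bound $n \leq \lfloor k/2 \rfloor$ by the shift-and-count trick. Let $S=\lbrace r_1,\dots,r_n \rbrace$ and $S' = \lbrace r+1 \pmod{k} : r \in S \rbrace$. The nonconsecutiveness established above implies $S \cap S' = \emptyset$. Since the cyclic shift is a bijection on $[k]$, we have $|S|=|S'|=n$, and $S \cup S' \subseteq [k]$, so $2n \leq k$, yielding $n \leq \lfloor k/2 \rfloor$.

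I do not expect a genuine obstacle here; the entire argument reduces to an observation and a counting lemma. The only place where care is needed is the cyclic bookkeeping: the condition $|r_i-r_j|\notin\lbrace 1, k-1 \rbrace$ from the theorem's statement is exactly nonadjacency in $C_k$ (the term $k-1$ encoding the wraparound edge from $k$ back to $1$), and one should verify the small cases $k=2$ and $k=3$, in which every pair of distinct indices is cyclically consecutive, so necessarily $n \leq 1 = \lfloor k/2 \rfloor$, consistent with the bound.
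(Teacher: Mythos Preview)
Your proof is correct. The paper does not actually give an argument here but simply cites \cite[Theorems~3.2,~3.3]{VK}; your approach---a direct contradiction from $\Xi_{r_i}\Xi_{r_i+1}<1$ for the nonconsecutiveness, followed by the cyclic shift $S\mapsto S+1$ to obtain $2n\le k$---is the standard independence-number argument on $C_k$ and is almost certainly what is carried out in that reference.
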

\begin{proof}
The proofs of these statements are identical to those in \cite[Theorem 3.2, 3.3]{VK}.
\end{proof}

We now use a mechanism to set up the integral corresponding to \eqref{hyperbolic polytope Probability} if $r_1, \dots, r_n \in [k]$ satisfy the first statement of the previous theorem. For each $j \in [n],$ define $\alpha_j$ to be the number of $\Xi_z$ (or $\Theta_z$) from $\lbrace \Xi_{r_j \pm 1} \rbrace$ (or $\lbrace \Theta_{r_j \pm 1} \rbrace$) with the property that $\sup(\Xi_z)=1/\Xi_{r_j}$ (or $\sup(\Theta_z)=1/\Theta_{r_j}$).
In words, $\alpha_j$ counts the number of bounds of the form $0<\Xi_z<1/\Xi_{r_j}$ (or $0<\Theta_z<1/\Theta_{r_j}$) that will appear when we set up the integral for the first probability term (or second probability term) in \eqref{hyperbolic polytope Probability}. 
\begin{theorem}\label{Alpha_j}
We have $$\alpha_j=2-\delta(k,2)-\sum_{m=0}^{j-1}\delta(|r_m-r_j|,2)+\delta(|r_m-r_j|,k-2),$$ where $\delta(a,b)=1$ if $a=b$ and $0$ else.
\end{theorem}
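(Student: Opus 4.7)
The plan is to decompose $\alpha_j$ as the ``default count'' of two bounds, adjusted by the two mechanisms that can reduce it: a self-collision of the two cyclic neighbors of $r_j$ (which happens only when $k=2$), and each earlier $r_m$ whose cyclic neighborhood overlaps with that of $r_j$. Concretely, the constraint $\Xi_{r_j}\Xi_z<1$ appearing in \eqref{hyperbolic polytope Probability}, together with the assumption $\Xi_{r_j}\geq 1$, produces the explicit bound $\Xi_z<1/\Xi_{r_j}$ for each cyclic neighbor $z\in\{r_j-1,r_j+1\}\pmod k$ of $r_j$. Since there are nominally two such neighbors, the unadjusted count is $\alpha_j=2$, and the task reduces to identifying when a prospective bound must be removed from this count.

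The first correction is immediate: when $k=2$, the two indices $r_j-1$ and $r_j+1$ coincide modulo $k$, so only one bound $\Xi_z<1/\Xi_{r_j}$ is produced rather than two, yielding the contribution $-\delta(k,2)$. The second correction arises from each $m<j$. By the ordering hypothesis $\Xi_{r_m}\geq\Xi_{r_j}$, the bound $\Xi_z<1/\Xi_{r_m}$ is tighter whenever $z$ is a cyclic neighbor of both $r_m$ and $r_j$, so such a bound has already been charged to $\alpha_m$ and should not be counted in $\alpha_j$. I would then identify when $r_m$ and $r_j$ share a cyclic neighbor. A direct congruence computation gives $r_j+1\equiv r_m-1\pmod k$ iff $r_m-r_j\equiv 2\pmod k$, which for distinct $r_m,r_j\in[k]$ is equivalent to $|r_m-r_j|\in\{2,k-2\}$; symmetrically for $r_j-1\equiv r_m+1\pmod k$. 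A sign-case analysis on $r_m-r_j$ then shows that when $k\neq 4$, exactly one of the two symbols $\delta(|r_m-r_j|,2)$ and $\delta(|r_m-r_j|,k-2)$ is nonzero per shared neighbor, while when $k=4$ the two symbols coincide and both fire, reflecting the fact that $r_m$ and $r_j$ share \emph{both} cyclic neighbors. In every case the number of bounds removed equals $\delta(|r_m-r_j|,2)+\delta(|r_m-r_j|,k-2)$, and summing over $m<j$ yields the claimed formula.

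The same argument transfers verbatim to the $\Theta_i$ variables since their ordering hypothesis is identical. The main care-point is the second correction: one must verify that the two delta symbols correctly distinguish the ``forward'' shared neighbor ($r_m\equiv r_j+2\pmod k$) from the ``backward'' shared neighbor ($r_m\equiv r_j-2\pmod k$), and confirm that they degenerate into a legitimate double count precisely in the $k=4$ case. A small-case sanity check, e.g.\ $k=4$ with $(r_1,r_2)=(1,3)$ giving $\alpha_1=2$ and $\alpha_2=0$, matches direct enumeration of the bounds $\Xi_2,\Xi_4<1/\Xi_{r_1}$. A further global consistency check is that $\sum_{j=1}^n\alpha_j$ should equal the total number of cyclic neighbors of $\{r_1,\dots,r_n\}$, which is forced telescopically once the corrections are accounted for correctly.
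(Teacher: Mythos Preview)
Your argument is correct. The paper itself does not supply a proof here; it simply defers to \cite[Theorem~3.5]{VK}, so there is no in-paper argument to compare against. Your decomposition---start from the default count of two cyclic neighbours, remove the self-collision when $k=2$, and for each earlier $r_m$ subtract the number of neighbours it shares with $r_j$---is the natural counting argument and is almost certainly what the cited reference contains. The one point you leave implicit is that two distinct earlier indices $r_m,r_{m'}$ cannot both share the \emph{same} neighbour $z$ with $r_j$ (so the sum over $m<j$ never over-subtracts); this follows because $z$ has at most two cyclic neighbours in $[k]$ and one of them is already $r_j$, but it is worth stating. Your $k=4$ sanity check with $(r_1,r_2)=(1,3)$ is apt, and the global consistency check on $\sum_j\alpha_j$ is a good addition. (Note also that the displayed sum in the statement should run from $m=1$, not $m=0$; you have silently, and correctly, read it that way.)
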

\begin{proof}
The proof is identical to that of \cite[Theorem 3.5]{VK}.
\end{proof}

Now we are ready to set up an integral representation for \eqref{hyperbolic polytope Probability} if $r_1, \dots, r_k \in [k]$ satisfy the first statement in \textbf{Theorem \ref{Conjecture on n}}. 

\begin{theorem}\label{Integral Representation}
If $r_1, \dots, r_k \in [k]$ satisfy the first statement in \textbf{Theorem \ref{Conjecture on n}}, we have \eqref{hyperbolic polytope Probability} is equal to the sum of the integrals 
\begin{align*}
J_{r_1, \dots, r_n} &= \left( \psi(1) \right)^{k-n-\sum_{j=1}^n \alpha_j} \int_{1 \leq \xi_{r_n} \leq \dots \leq \xi_{r_1}} \frac{\left( \frac{\pi}{a} \sin \left( \frac{\pi}{a} \right) \right)^n \left(\psi\left( \frac{1}{\xi_{r_1}} \right) \right)^{\alpha_1} \dots \left(\psi\left( \frac{1}{\xi_{r_n}} \right) \right)^{\alpha_n}}{(1+\xi_{r_1}^a) \dots (1+\xi_{r_n}^a) } \ \ d\xi_{r_n} \  \dots \  d\xi_{r_1} \\
K_{r_1, \dots, r_n} &= \left(\phi(1) \right)^{k-n-\sum_{j=1}^n \alpha_j} \int_{1 \leq \theta_{r_n} \leq \dots \leq \theta_{r_1}} \frac{\left( \frac{\pi}{a} \sin \left( \frac{\pi}{a} \right) \right)^n \left(\phi\left( \frac{1}{\theta_{r_1}} \right) \right)^{\alpha_1} \theta_{r_1}^{a-2} \dots \left(\phi\left( \frac{1}{\theta_{r_n}} \right) \right)^{\alpha_n} \theta_{r_n}^{a-2}}{(1+\theta_{r_1}^a) \dots (1+\theta_{r_n}^a) } \ d\theta_{r_n} \  \dots \  d\theta_{r_1},
\end{align*}
where $\psi(t)$ and $\phi(t)$ are the cumulative distribution functions defined in \eqref{Xi CDF}, \eqref{Theta CDF}, respectively. 
\end{theorem}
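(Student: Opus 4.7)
The plan is to decompose the event defining \eqref{hyperbolic polytope Probability} by conditioning on the subset of indices where the underlying random variable is at least $1$, together with the linear order of those variables, and then to integrate out the remaining variables explicitly.

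Fix a valid tuple $(r_1, \dots, r_n)$ satisfying the first statement of \textbf{Theorem \ref{Conjecture on n}}. I would restrict to the event that $\{r_1, \dots, r_n\}$ is precisely the set of indices with $\Xi_{r} \geq 1$, ordered as $1 \leq \Xi_{r_n} \leq \dots \leq \Xi_{r_1}$, and analogously for the $\Theta_i$'s. The goal is to show that the contribution of each such configuration to \eqref{hyperbolic polytope Probability} is $J_{r_1, \dots, r_n} + K_{r_1, \dots, r_n}$; summing over all valid tuples, together with the $n=0$ case already handled by \textbf{Theorem \ref{Prob Xi_i, Theta_i<1}}, then assembles the full probability.

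First, I would translate each cyclic product constraint $\Xi_i \Xi_{i+1} < 1$ into a pointwise bound on an individual variable. If both $i$ and $i+1$ lie outside $\{r_1, \dots, r_n\}$, the constraint is automatic from $\Xi_i, \Xi_{i+1} < 1$; if both lie inside, it is excluded by the cyclic nonconsecutivity part of \textbf{Theorem \ref{Conjecture on n}}; and if exactly one, say $i = r_j$, lies inside, the constraint becomes $\Xi_{i+1} < 1/\Xi_{r_j}$. Thus each $z \notin \{r_1, \dots, r_n\}$ inherits upper bounds $1/\Xi_{r_j}$ from each of its cyclic neighbors in $\{r_1, \dots, r_n\}$, and the binding bound comes from the largest such $\Xi_{r_j}$, that is, from the $r_m$ of smallest index in the ordering.

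Next, I would integrate out each $\Xi_z$ for $z \notin \{r_1, \dots, r_n\}$: if the binding bound is $1/\Xi_{r_j}$, this contributes a factor $\psi(1/\xi_{r_j})$; if $z$ has no cyclic neighbor in $\{r_1, \dots, r_n\}$, the only binding bound is $1$ and the factor is $\psi(1)$. By \textbf{Theorem \ref{Alpha_j}}, the number of variables whose binding bound comes from $r_j$ is exactly $\alpha_j$, leaving $k - n - \sum_{j=1}^n \alpha_j$ variables with bound $1$. Collecting these factors together with the densities of the remaining $R$-variables and integrating over the simplex $1 \leq \xi_{r_n} \leq \dots \leq \xi_{r_1}$ produces $J_{r_1, \dots, r_n}$. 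A parallel computation for the $\Theta_i$'s, whose density carries the extra weight $\theta^{a-2}$ in the numerator, yields $K_{r_1, \dots, r_n}$.

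The main obstacle is the combinatorial accounting encoded in $\alpha_j$ for the degenerate configurations, namely $k=2$, where the two cyclic neighbors of any index coincide, and the configurations where $|r_m - r_j| \in \{2, k-2\}$ for some $m < j$, so that $r_m$ and $r_j$ share a common neighbor in $\{r_1,\dots,r_n\}^c$ whose bound is tighter from $r_m$. These are exactly the corrections subtracted in the formula of \textbf{Theorem \ref{Alpha_j}}, so once that theorem is invoked the remainder is the routine bookkeeping of assembling the density factors with the CDF factors into the stated expressions $J_{r_1, \dots, r_n}$ and $K_{r_1, \dots, r_n}$.
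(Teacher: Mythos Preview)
Your proposal is correct and follows essentially the same approach as the paper: fix the ordered set of indices where the variables exceed $1$, convert each cyclic constraint into an upper bound on a single small variable, integrate out the small variables to produce factors of the CDFs $\psi$ or $\phi$, and invoke the count $\alpha_j$ from \textbf{Theorem \ref{Alpha_j}} to tally how many of each factor appear. If anything, you spell out more carefully than the paper does the case analysis for each edge $\{i,i+1\}$ and the reason the binding bound at a shared neighbor comes from the larger $\Xi_{r_m}$, but the underlying argument is the same.
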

\begin{proof}
We already know the integral bounds for $\Xi_{r_1}, \dots, \Xi_{r_n}.$ We already know there are $\sum_{j=1}^n \alpha_j$ bounds of the form $0<\Xi_z<1/\Xi_{r_j}.$ This means there are $k-n-\sum_{j=1}^n \alpha_j$ bounds of the form $0<U_t<1.$ Explicitly, the first probability term in \eqref{hyperbolic polytope Probability} is
\begin{align} \label{Expanded Xi_i>1 Integral}
    \int_{1 \leq \xi_{r_n }\leq \dots \leq \xi_{r_1}}\underbrace{\int_{0}^{1/\xi_{r_1}} \dots \int_{0}^{1/\xi_{r_1}}}_{\alpha_1 \text{ times}} \dots \underbrace{\int_{0}^{1/\xi_{r_n}} \dots \int_{0}^{1/\xi_{r_n}}}_{\alpha_n \text{ times}} \underbrace{\int_{0}^{1} \dots \int_{0}^{1}}_{k-n-\sum_{j=1}^n \alpha_j \text{ times}} f_{\Xi_1}(\xi_1) \dots f_{\Xi_k}(\xi_k) \ dV,
\end{align}
where $dV$ is the product of the differentials $d \xi_1, \dots d \xi_k$ in the appropriate order as dictated by the integral bounds. It follows that \eqref{Expanded Xi_i>1 Integral} is equal to $J_{r_1, \dots, r_n}$ upon evaluating the innermost integrals. 

A similar argument can be used to show that the second probability in \eqref{hyperbolic polytope Probability} is equal to $K_{r_1, \dots, r_n}.$
\end{proof}

Our theorems and \eqref{S(k,a) hyperbolic polytope Integral} give the following result
\begin{align}\label{Gigantic Formula}
    \sum_{n \geq 0} \frac{(-1)^{nk}}{(an+1)^k} &= \left(\int_{0}^1 \frac{\frac{a}{\pi} \sin \left(\frac{\pi}{a} \right)}{1+\xi^a} \  d \xi\right)^k + \left(\int_{0}^1 \frac{\frac{a}{\pi} \sin \left (\frac{\pi}{a} \right) \theta^{a-2}}{1+\theta^a} \  d \theta\right)^k + \sum_{n=1}^{\lfloor k/2 \rfloor} \sum_{\substack{(r_1, \dots, r_n) \in [k]^n: \\ |r_i-r_j| \notin \lbrace 0,1,k-1 \rbrace, \\ 
    i \neq j \in [n]}} J_{r_1, \dots, r_n} + K_{r_1, \dots, r_n},
\end{align}
where $J_{r_1, \dots, r_n}$ and $K_{r_1, \dots, r_n}$ are defined as in the previous theorem.

\begin{bibdiv}
\begin{biblist}

\bib{BFY}{article}{
author = {Bourgade, Paul},
author={Fujita, Takahiko},
author={Yor, Marc},
doi = {10.1214/ECP.v12-1244},
journal = {Electronic Communications in Probability},
pages = {73--80},
publisher = {The Institute of Mathematical Statistics and the Bernoulli Society},
title = {Euler's formulae for $\zeta(2n)$ and products of Cauchy
 variables},
 url = {http://dx.doi.org/10.1214/ECP.v12-1244},
	volume = {12},
	year = {2007}
}

\bib{JC}{article}{
  title={Evaluation of Certain Alternating Series},
  author={Choi, Junesang},
  journal={Honam Mathematical J},
  volume={36},
  pages={263--273},
  year={2014}
}

\bib{BCK}{article}{
  title={Sums of generalized harmonic series and volumes},
  author={Beukers, Frits},
  author={Calabi, Eugenio},
  author={Kolk, Johan AC},
  year={1993},
  journal = {Nieuw Archief voor Wiskunde},
  number = {11},
  pages = {561-573},
  publisher={Rijksuniversiteit Utrecht. Mathematisch Instituut}
}

\bib{FL}{article}{
  title={New definite integrals and a two-term dilogarithm identity},
  author={Lima, FMS},
  journal={Indagationes Mathematicae},
  volume={23},
  number={1},
  pages={1--9},
  year={2012},
  publisher={Elsevier}
}

\bib{DA'K}{article}{
  title={$\zeta(n)$ via hyperbolic functions},
  author={D'Avanzo, Joseph},
  author={Krylov, Nikolai},
  journal={Involve, a Journal of Mathematics},
  volume={3},
  number={3},
  pages={289--296},
  year={2010},
  publisher={Mathematical Sciences Publishers}
}

\bib{NDE}{article}{
 ISSN = {00029890, 19300972},
 URL = {http://www.jstor.org/stable/3647742},
 author = {Elkies, Noam David},
 journal = {The American Mathematical Monthly},
 number = {7},
 pages = {561-573},
 publisher = {Mathematical Association of America},
 title = {On the Sums $\sum_{k=-\infty}^{\infty}(4k + 1)^{-n}$ },
 volume = {110},
 year = {2003}
}

\bib{ZS2010}{article}{
  title={Sums of generalized harmonic series for kids from five to fifteen},
  author={Silagadze, Zurab},
  journal={arXiv preprint arXiv:1003.3602},
  year={2010}
}

\bib{ZS2012}{article}{
  title={Comment on the sums $S(n)=\sum\limits_{k=-\infty}^\infty  \frac{1}{(4k+1)^n}$},
  author={Silagadze, Zurab},
  journal={Georgian Math. J.},
  volume={19},
  number={3},
  pages={587--595},
  year={2012}
}

\bib{ZK}{book}{
  title={Periods},
  author={Zagier, Don},
  author={Kontsevich, Maxim},
  booktitle={Mathematics Unlimited: 2001 and beyond},
  pages={771--808},
  year={2001},
  publisher={Springer}
}

\bib{GR}{book}{
  title = {Table of Integrals, Series, and Products
},
  author = {Gradshteyn, I. S.}, 
  author={Ryzhik, I. M.},
  edition = {Seventh},
  editor = {Zwillinger, Daniel and Moll, Victor H.},
  isbn = {9780123849342 0123849349},
  publisher = {Academic Press},
  year = {2007}
}

\bib{VK}{article}{
    author = {Kaushik, Vivek},
    author={Ritelli, Daniele},
     TITLE = {Evaluation of harmonic sums with integrals},
   JOURNAL = {Quart. Appl. Math.},
  FJOURNAL = {Quarterly of Applied Mathematics},
    VOLUME = {76},
      YEAR = {2018},
    NUMBER = {3},
     PAGES = {577--600},
      ISSN = {0033-569X},
       DOI = {10.1090/qam/1499},
       URL = {https://doi.org/10.1090/qam/1499},
}

\end{biblist}
\end{bibdiv}

\end{document}